\definecolor{citecolour}{rgb}{0.0, 0.0, 0.8}
\colorlet{linkcolour}{green!50!black}
\newtheorem{prevtheorem}{Theorem}
\newtheorem{theorem}{Theorem}
\newtheorem{proposition}{Proposition}
\newtheorem{remark}{Remark}
\newtheorem{lemma}[theorem]{Lemma}
\newtheorem{corollary}{Corollary}
\newenvironment{proofofCorA}{{\bf {Proof of Corollary \ref{Cor:dl}.} }}{\hfill $\blacksquare$ \\}
\def\Z{\mathbf{Z}}
\def\C{\mathbf{C}}
\def\N{\mathbf{N}}
\def\D{\mathcal{D}}
\def\dl{\mathrm{dl}}
\def\PSL{\mathrm{PSL}}
\def\SL{\mathrm{SL}}
\def\Aut{\mathrm{Aut}}
\def\Syl{\mathrm{Syl}}
\begin{document}

\title{Groups with 5 nontrivial conjugacy classes of non self-normalizing subgroups are solvable }
\author{Maria Loukaki }
\address{Department of Mathematics \& Applied Mathematics, University of Crete, Greece}
\email{mloukaki@uoc.gr}
\date{February 2025}
\keywords{non self-normalizing, solvable groups}
\subjclass[2010]{20E07, 20E34, 20D10, 20D15}

\begin{abstract}
For any  $n$  nonnegative integer,  a family of groups, denoted by $ \mathcal{D}_n $, was introduce by Bianchi et al.,  as the collection of all  finite groups  with exactly $n$ conjugacy classes of nontrivial, non self-normalizing subgroups. It was conjectured that $\mathcal{D}_5$  consists of solvable groups with derived length at most $3$. In this note we verify their conjecture.

\end{abstract}

\maketitle

\section{Introduction}
For any $n$ nonnegative integer,  let $ \D_n $  denote the family of all finite groups  with exactly $n$ conjugacy classes of nontrivial, non self-normalizing subgroups. These families  were  introduced by Bianchi et al. in \cite{Bianchi}, and their properties regarding solvability and nilpotency have been examined. Several interesting  results were established, including a complete classification of the groups lying in $\D_i$ for $i=0,1, 2, 3$. Additionaly, it was shown that  the families $\D_i$  contain only solvable groups for all $i=0,1,2,3$ and $4$ with one exception: $A_5 \in \D_4$. Moreover, the derived length $\dl(G)$ of any solvable $G \in D_i$,  for $i=0, 1, \cdots, 4$, is  at most  $2$ unless $G \cong \SL_2(3)$. 

In the same paper, the authors conjectured that any \( G \in \D_5 \) is solvable with \( \dl(G) \leq 3 \) which is confirmed by our main result:
\begin{prevtheorem}\label{Thm:A}
    If  a finite group $G$ lies in $\D_5$ then $G$ is solvable with derived length   at most $3$.  
\end{prevtheorem}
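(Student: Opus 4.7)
The plan is to combine induction on $|G|$ with the classification of $\D_i$ for $i \leq 4$ from \cite{Bianchi}. The central tool is a quotient inequality: for any nontrivial proper normal subgroup $N$ of $G$, the preimage map gives a bijection between conjugacy classes of nontrivial proper non self-normalizing subgroups of $G/N$ and classes of non self-normalizing subgroups $H$ of $G$ satisfying $N \lneq H \lneq G$. Since $N$ is itself a nontrivial normal (hence non self-normalizing) subgroup of $G$ not appearing in that bijection, the number of such classes in $G/N$ is at most four, so $G/N \in \D_m$ for some $m \leq 4$. Choosing $N$ minimal normal, the isomorphism type of $G/N$ is then completely known from the results in \cite{Bianchi}, which sets up an induction on $|G|$.

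For the solvability assertion I would first rule out that $G$ is simple: a direct inspection of the finite simple groups of small order ($A_5$, $\PSL_2(7)$, $A_6$, $\PSL_2(8)$, $\PSL_2(11)$, \ldots) shows that none has exactly five conjugacy classes of non self-normalizing subgroups, with $A_5 \in \D_4$ being the smallest case and the others giving strictly more. If $G$ is nonsimple and nonsolvable, then $G/N$ is nonsolvable with $m \leq 4$, which forces $G/N \cong A_5$. Extensions of $A_5$ by an elementary abelian $p$-group $N$ fall into three families: the central extension $\SL_2(5)$ (when $p = 2$), the split semidirect products $A_5 \ltimes N$ with $N$ a nontrivial $\F_p A_5$-module, and the direct products $A_5 \times N$. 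For each family I would carry out a direct enumeration; for example $\SL_2(5)$ contains the non self-normalizing classes $Z(\SL_2(5))$, $C_3$, $C_4$, $C_5$, $C_6$, $C_{10}$ and $Q_8$, so $\SL_2(5) \in \D_7$, and the direct products and split extensions give similar or larger counts.

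For the derived length bound, the quotient inequality together with the $\D_{\leq 4}$ classification gives $\dl(G/N) \leq 2$ unless $G/N \cong \SL_2(3)$; in the generic case $\dl(G) \leq \dl(G/N) + 1 \leq 3$ and we are done. The exceptional case $G/N \cong \SL_2(3)$ requires $m = 4$, which by the quotient inequality forces \emph{every} non self-normalizing subgroup of $G$ to contain $N$. I would derive a contradiction via a Frattini argument: when $p \neq 2$, the preimage $PN$ in $G$ of the normal Sylow $2$-subgroup $Q_8 \lhd \SL_2(3)$ is normal in $G$, so $G = N \cdot N_G(P)$ yields $|N_G(P)| \geq |G|/|N| = 24 > 8 = |P|$; when $p = 2$, a Sylow $3$-subgroup $C_3$ of $G$ similarly satisfies $|N_G(C_3)| \geq 6 > 3$. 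In both cases one obtains a non self-normalizing subgroup of $G$ (namely $P$ or $C_3$) not containing $N$, contradicting the rigidity condition.

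The main obstacle is precisely the $\SL_2(3)$ exception above: the rigidity ``every non self-normalizing subgroup of $G$ contains $N$'' is extremely restrictive and requires the Sylow/Frattini analysis together with a clean case split on the prime $p$ dividing $|N|$. A secondary challenge is the enumeration of conjugacy classes of subgroups in the various nonsolvable extensions of $A_5$, particularly the faithful split extensions $A_5 \ltimes N$, each of which must be examined individually as an $\F_p A_5$-module.
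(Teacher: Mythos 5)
Your overall skeleton for solvability (quotient inequality, reduce to $G/N\cong A_5$, derive a contradiction) matches the paper's, but two steps are genuinely missing and one is on shaky ground. First, ruling out simple groups by ``direct inspection of the finite simple groups of small order'' is not an argument: there are infinitely many finite simple groups, and nothing in your proposal bounds the ones that must be checked or justifies that larger ones give ``strictly more'' classes. The paper has to work for this reduction: it shows each odd prime divisor contributes at least $+1$ to $\D(G)$ and the prime $2$ at least $+2$ (non-cyclic Sylow $2$-subgroups via the normal $p$-complement theorems), deduces $|\varpi(G)|\in\{3,4\}$ with cyclic odd Sylows and $|G|_2\in\{4,8\}$ in the four-prime case, and only then invokes Herzog's list for three primes and Suzuki's and Brauer--Suzuki's theorems for four, finishing with a short finite list of $\PSL(2,s)$'s. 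Second, your claim ``if $G$ is nonsimple and nonsolvable then $G/N$ is nonsolvable'' silently assumes the minimal normal subgroup $N$ is solvable (hence elementary abelian). A priori $N$ could be a product of nonabelian simple groups, in which case $G/N$ need not be nonsolvable and your reduction to $A_5$-extensions collapses. The paper devotes a separate lemma to this: using the order constraints on proper normal subgroups of a $\D_5$-group it pins a hypothetical nonsolvable $N$ down to order $p^2qr$, then to $A_5$ via Suzuki, and kills it with $G\cong S_5$, $\D(S_5)=13$. Third, your treatment of the $A_5$-extensions is an open-ended enumeration over all $\F_pA_5$-modules (and your list of ``three families'' omits possible non-split non-central extensions). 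The efficient route, which you already have the tool for but do not use, is that $\D(G/N)=4=\D(G)-1$ forces $\D_G(N)=0$, so $N$ is cyclic of prime order $s$ and then central; the five classes are then $N$ and the preimages of the four classes of $A_5$ (orders $s,2s,3s,4s,5s$), and a Sylow $3$- or $5$-subgroup of one of those preimages (for $s$ coprime to $3$ or $5$) is a sixth non-self-normalizing class. No case-by-case module analysis is needed.

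On the derived-length half you genuinely diverge from the paper, and your route works. The paper applies its order constraints to $G'$ and argues in four cases that $G^{(2)}$ must coincide with one of the five listed subgroups and hence be abelian. You instead pass to a minimal normal $N$, use $\D(G/N)\le 4$ and the $\D_{\le 4}$ classification to get $\dl(G/N)\le 2$ except when $G/N\cong\SL_2(3)$, and then exclude that exception: $\D(G/N)=4$ forces every non-self-normalizing subgroup of $G$ to contain $N$, while a Frattini argument on the preimage of $Q_8\lhd\SL_2(3)$ (for $|N|$ odd) or on a Sylow $3$-subgroup (for $|N|=2$, where $N$ is central) produces a non-self-normalizing subgroup not containing $N$. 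That is a correct and arguably cleaner argument than the paper's case analysis, but it does not repair the gaps in the solvability half, on which it depends.
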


As an immediate consequence we get the following.
\begin{corollary}\label{Cor:dl}
 Assume  $n \geq 5$ and  $G \in \D_n $ is a  solvable finite group. Then $\dl(G) \leq n-2$.
\end{corollary}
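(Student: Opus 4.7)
The plan is to argue by induction on $n \geq 5$. The base case $n = 5$ is precisely Theorem~\ref{Thm:A}, which yields $\dl(G) \leq 3 = n - 2$ for every $G \in \D_5$.

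For the inductive step, fix $n \geq 6$ and suppose the statement holds for all integers in $\{5, \ldots, n-1\}$. Let $G \in \D_n$ be solvable. We may assume $G$ is nonabelian, else $\dl(G) \leq 1 \leq n-2$. Pick a minimal normal subgroup $N$ of $G$; since $G$ is solvable and nonabelian, $N$ is elementary abelian and properly contained in $G$, so $\dl(N) = 1$. The standard estimate $\dl(G) \leq \dl(G/N) + \dl(N)$ reduces the task to bounding $\dl(G/N)$.

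The key step, which I expect to be the only real content of the argument, is the reduction lemma: if $G/N \in \D_m$, then $m \leq n - 1$. The preimage map $K/N \mapsto K$ is a conjugacy-preserving bijection between subgroups of $G/N$ and subgroups of $G$ containing $N$. If $K/N$ is a nontrivial non-self-normalizing subgroup of $G/N$, then $K$ is nontrivial (it contains $N$), proper in $G$, and satisfies $N_G(K)/N = N_{G/N}(K/N) \supsetneq K/N$, so $K$ is non-self-normalizing in $G$. This produces an injection from the $m$ conjugacy classes counted by $G/N \in \D_m$ into those counted by $G \in \D_n$. Moreover, $N$ itself is a nontrivial non-self-normalizing subgroup of $G$ (as a proper normal subgroup), yet its class corresponds to the trivial subgroup of $G/N$ and so lies outside the image of the injection. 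Therefore $m + 1 \leq n$.

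To conclude: if $m \geq 5$, the inductive hypothesis gives $\dl(G/N) \leq m - 2 \leq n - 3$; if $m \leq 4$, then by the classification results of Bianchi et al.\ recalled in the introduction, $\dl(G/N) \leq 3$ (with equality possible only when $G/N \cong \SL_2(3)$), which is still at most $n - 3$ since $n \geq 6$. Combining with $\dl(N) = 1$, we get $\dl(G) \leq n - 2$, as required. The main obstacle is really only the reduction lemma $m \leq n-1$; once in hand, the induction closes cleanly and Corollary~\ref{Cor:dl} follows.
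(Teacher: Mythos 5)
Your proposal is correct and follows essentially the same route as the paper: induction on $n$ with base case $n=5$ given by \autoref{Th:dl}, passing to a minimal normal (hence abelian) subgroup $N$ and using $\D(G/N)\leq \D(G)-1$ to apply the inductive hypothesis to $G/N$. The only difference is cosmetic: you re-derive the reduction inequality $\D(G/N)\leq n-1$ from scratch and explicitly treat the case $\D(G/N)\leq 4$, whereas the paper simply invokes \autoref{Lem:D(G/N)} and the known bounds for small $\D$.
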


We follow the notation used in \cite{Bianchi}. In particular, we denote by $\D(G)$ the number of conjugacy classes of nontrivial subgroups of $G$ that are not self-normalizing. Moreover,  if $N \unlhd G$ we write $\D_G(N)$  for the number of $G$-conjugacy classes of nontrivial subgroups of $G$ properly  contained in $N$ that are not self-normalizing.  We also write $\varpi(G)$ for  the set of primes that divide the order of $G$.

\section{Proofs}
Before giving the proof of our theorem  we recall some useful  results from \cite{Bianchi}. The following is Lemma 3.2 there.
\begin{lemma}\label{Lem:D(G/N)}
    If $1 \neq N \lhd G$ then $\D(G/N) \leq \D(G)- \D_G(N) -1 \leq \D(G) - 1$. 
\end{lemma}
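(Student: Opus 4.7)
The plan is to run the standard subgroup-correspondence argument, augmented by a small amount of bookkeeping. First I would invoke the correspondence theorem: subgroups of $G/N$ are in bijection with subgroups of $G$ containing $N$ via $H \leftrightarrow H/N$. Under this bijection, $H/N$ is self-normalizing in $G/N$ iff $H$ is self-normalizing in $G$ (since $N_{G/N}(H/N) = N_G(H)/N$), and $H_1/N, H_2/N$ are $G/N$-conjugate iff $H_1, H_2$ are $G$-conjugate. Consequently $\D(G/N)$ equals the number of $G$-conjugacy classes of subgroups $H$ with $N \lneq H \lneq G$ which are not self-normalizing in $G$.

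With this reformulation, I would partition the $G$-conjugacy classes counted by $\D(G)$ into pairwise-disjoint pieces, keeping only three of them. Namely: classes of nontrivial non-self-normalizing $H$ with $H \lneq N$, which by definition number $\D_G(N)$; the singleton class of $N$ itself, contributing $1$ because $N \lhd G$ with $N$ proper forces $N_G(N) = G \neq N$; and classes of non-self-normalizing $H$ with $N \lneq H$, which by the first step number $\D(G/N)$. Disjointness is immediate from normality of $N$: any $G$-conjugate of a subgroup contained in (respectively, containing) $N$ is again contained in (respectively, contains) $N$. Any leftover classes --- those of subgroups incomparable with $N$ --- only strengthen the bound.

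Summing the three contributions yields $\D(G) \geq \D_G(N) + 1 + \D(G/N)$, which is the first inequality; the second follows at once from $\D_G(N) \geq 0$. I do not anticipate any real obstacle, since the argument is essentially accounting on top of the lattice isomorphism theorem. The only point that requires care is the tacit assumption $N \lneq G$, needed so that $N$ itself supplies the extra $+1$ in the count; the degenerate case $N = G$ has to be excluded (or read vacuously) for the stated bound to hold.
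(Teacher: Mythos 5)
Your argument is correct and is precisely the standard one: the paper itself gives no proof of this statement but simply quotes it as Lemma 3.2 of Bianchi et al., and the intended justification is exactly your correspondence-theorem bookkeeping, splitting the classes counted by $\D(G)$ into those below $N$, the class of $N$ itself, and those above $N$. Your remark that $N$ must be proper for $N$ to contribute the $+1$ (and for the inequality to hold at all) is a valid observation about a hypothesis left tacit in the statement.
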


Several of the results proven in \cite{Bianchi} for $\D_i$ with $i \leq 4$,  are collected in the next proposition that we will use often. 
\begin{proposition}\label{Prop:D_{0-4}}
Assume $G$ is a finite group. 
    \begin{itemize}
        \item If $G \in \D_0$ if and only if  $G$ is cyclic of prime order. (Proposition 3.1 in  \cite{Bianchi}).
        \item If $G \in \D_n$ with $n \leq 3$ then $G$ is solvable with  derived length at most $2$. (Proposition 3.3 in  \cite{Bianchi}). 
        \item The only non-solvable group in $\D_4$ is the alternating group $A_5$.(Theorem 8.2  in \cite{Bianchi}). 
        \item If $G $ is a solvable group in $\D_4$ then either $\dl(G) \leq 2$ or $G \cong \SL_2(3)$. (Theorem 8.3 in \cite{Bianchi}).
    \end{itemize}
\end{proposition}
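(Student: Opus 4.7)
The proposition collects the classification of $\D_n$ for $n \leq 4$, so I would prove the four bullets in order, leaning on \autoref{Lem:D(G/N)} throughout along with the simple fact that any nontrivial proper normal subgroup of $G$ is automatically non-self-normalizing, since its normalizer equals $G$. For the first bullet, the ``if'' direction is immediate, while for the converse $\D(G) = 0$ forces $G$ to be simple (else a proper nontrivial normal subgroup would witness $\D(G) > 0$) and forces every cyclic subgroup $\langle g \rangle$, $g \neq 1$, to be self-normalizing. Choosing a Sylow $p$-subgroup $P$ and $1 \neq z \in Z(P)$ gives $P \leq C_G(z) \leq N_G(\langle z \rangle) = \langle z \rangle$, so $P$ is cyclic and self-normalizing; Burnside's normal $p$-complement theorem plus simplicity then forces $|G| = p$.

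For the second bullet I would induct on $n$ with the first bullet as base case. Given $G \in \D_n$ with $n \leq 3$ and a minimal normal $N \lhd G$, \autoref{Lem:D(G/N)} yields $\D(G/N) \leq n - 1$, so by induction $G/N$ is solvable with derived length at most $2$. A variant of the Sylow argument above applied to a minimal counterexample shows that $N$ may be taken abelian, hence $G$ is solvable. The delicate point is $\dl(G) \leq 2$ rather than $\leq 3$, which I would settle case-by-case for $n = 1, 2, 3$ by observing that a nontrivial $G''$ would produce a normal chain $1 < G'' \leq G' < G$ whose terms, plus some proper normal subgroups of $G''$, already contribute more than $n$ conjugacy classes of non-self-normalizing subgroups.

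For the third bullet, let $G \in \D_4$ be non-solvable. The previous bullet and \autoref{Lem:D(G/N)} show that every nontrivial proper quotient of $G$ is solvable, so the solvable radical of $G$ is trivial and every minimal normal subgroup is a direct product $S^k$ of copies of a non-abelian simple group $S$. Counting conjugacy classes of nontrivial subgroups inside $S^k$ that remain non-self-normalizing in $G$ (e.g.\ diagonal and subdirect subgroups) forces $k = 1$ and $\D(S) \leq 4$. The bound $\D(S) \leq 4$ bounds $|S|$ by an explicit constant, and an inspection of the small non-abelian simples leaves only $A_5$, whose four non-self-normalizing conjugacy classes are $\Z_2, \Z_3, V_4, \Z_5$. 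A subgroup-count check on $S_5 = \mathrm{Aut}(A_5)$ rules out any proper extension of $A_5$ lying in $\D_4$, forcing $G = A_5$.

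Finally, for the fourth bullet, let $G \in \D_4$ be solvable with $\dl(G) \geq 3$. Applying \autoref{Lem:D(G/N)} to $N = G''$ shows $G/G''$ is metabelian and in some $\D_m$ with $m \leq 3$; studying the Fitting subgroup of $G$ together with the induced action on the chief factors between $G''$ and $G$ pins down $|G| = 24$ with a quaternion Sylow $2$-subgroup and a cyclic Sylow $3$-subgroup acting faithfully, i.e.\ $G \cong \SL_2(3)$. The main obstacle across the whole proposition is the third bullet: extracting ``$A_5$ is the sole non-solvable example'' from an abstract subgroup-class count demands a case analysis married to the list of small simple groups, and that is what absorbs most of the work.
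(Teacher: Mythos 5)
First, a structural point: the paper does not prove this proposition at all. Each bullet is quoted with its citation (Propositions 3.1 and 3.3, Theorems 8.2 and 8.3 of \cite{Bianchi}); it is imported background, and the present paper's contribution starts only after it. So your attempt is effectively a blind reconstruction of a substantial portion of \cite{Bianchi}, and it has to be judged as such. Your argument for the first bullet is genuinely fine: $\D(G)=0$ makes every nontrivial proper subgroup self-normalizing, the centralizer trick $P\leq C_G(z)\leq N_G(\langle z\rangle)=\langle z\rangle$ forces cyclic self-normalizing Sylow subgroups, and Burnside's normal $p$-complement theorem plus simplicity gives $|G|=p$. This is a complete and correct proof of that bullet.

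The remaining bullets, however, contain genuine gaps rather than compressed proofs. In bullet 2, the sentence ``a variant of the Sylow argument above applied to a minimal counterexample shows that $N$ may be taken abelian'' is precisely the hard content, not a variant of anything you wrote: you must show that no non-abelian simple group lies in $\D_n$ for $n\leq 3$ (otherwise a minimal counterexample can itself be simple and there is no $N$ to quotient by). That statement is provable by a prime-counting argument --- the non-cyclic Sylow $2$-subgroup contributes at least $+2$, each odd prime divisor at least $+1$, and $|\varpi(G)|\geq 3$ by Burnside's $p^aq^b$-theorem, so $\D(G)\geq 4$ --- but you never give it. The most serious gap is in bullet 3: the claim that ``$\D(S)\leq 4$ bounds $|S|$ by an explicit constant'' is unsupported, and I see no elementary route to it; a bound on $\D$ does not convert into a bound on the order. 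What the counting actually yields is $|\varpi(S)|=3$, and from there one needs Herzog's classification \cite{Herzog} of simple groups with exactly three prime divisors (a deep, classification-adjacent input), Suzuki-type theorems on Sylow $2$-structure, the Brauer--Suzuki theorem, and explicit computation of $\D$ for the eight resulting groups. This is exactly the machinery the present paper is forced to deploy for the $\D_5$ analogue in \autoref{Lem:Simple}, which is the best evidence that your ``inspection of the small non-abelian simples'' hides essentially all of the work. (There is also a minor fusion issue: subgroups of $S$ in distinct $S$-classes can be $G$-conjugate, so one should count $G$-classes directly, e.g.\ via subgroup orders, rather than pass through $\D(S)$.) Bullet 4 has the same character: ``studying the Fitting subgroup \dots\ pins down $|G|=24$'' asserts the conclusion $G\cong\SL_2(3)$ without an argument; ruling out all other solvable configurations with $\dl(G)\geq 3$ is a real case analysis, which is why it occupies its own theorem in \cite{Bianchi}.
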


We will now prove our theorem, beginning with a straightforward lemma.

\begin{lemma}\label{Lem:D(G/N)=D(G)-1}
If $N \unlhd G$ with $\D(G/N) =\D(G)-1$ then $\D_G(N)= \D(N)=0$ and $N$ is a cyclic group of prime order.
\end{lemma}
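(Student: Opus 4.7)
The plan is to chain together Lemma~\ref{Lem:D(G/N)} and the first item of Proposition~\ref{Prop:D_{0-4}} in three short steps.

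First, I would substitute the hypothesis $\D(G/N) = \D(G) - 1$ into the inequality $\D(G/N) \le \D(G) - \D_G(N) - 1$ supplied by Lemma~\ref{Lem:D(G/N)}. Since $\D_G(N) \ge 0$ by definition, this forces $\D_G(N) = 0$, which is the first of the three conclusions.

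Next, I would unpack what $\D_G(N) = 0$ actually says: every nontrivial subgroup $H$ of $G$ properly contained in $N$ must satisfy $N_G(H) = H$. The only point to check is that this also forces $H$ to be self-normalizing inside $N$, which is immediate from the identity $N_N(H) = N_G(H) \cap N = H$, valid because $H \le N$. Combined with the trivial fact that $N$ itself is self-normalizing in $N$, this yields $\D(N) = 0$.

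Finally, by the first item of Proposition~\ref{Prop:D_{0-4}}, $\D(N) = 0$ is equivalent to $N$ being cyclic of prime order, which completes the proof.

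Since each step is either a direct substitution or a one-line observation, there is no real obstacle here; the only mild subtlety is tracking the distinction between being self-normalizing in $G$ versus in $N$ (equivalently between $G$-conjugacy and $N$-conjugacy classes of subgroups of $N$). This distinction collapses as soon as one observes that $H \le N$ gives $N_N(H) \subseteq N_G(H)$, so self-normalization in $G$ implies self-normalization in $N$.
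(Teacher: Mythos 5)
Your proposal is correct and follows essentially the same route as the paper: apply Lemma~\ref{Lem:D(G/N)} to force $\D_G(N)=0$, deduce $\D(N)=0$, and invoke the characterization of $\D_0$ from Proposition~\ref{Prop:D_{0-4}}. Your extra remark that $N_N(H)=N_G(H)\cap N$ just makes explicit a step the paper leaves implicit.
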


\begin{proof}
    By \autoref{Lem:D(G/N)} we have $\D(G)-1=\D(G/N) \leq\D(G)- \D_G(N)-1$. Hence $\D_G(N)=0$ and so every nontrivial proper subgroup  of $N$ is selfnormalizing. Thus $\D(N)=0$ and $N$ is cyclic of prime order by \autoref{Prop:D_{0-4}}. 
\end{proof}

The next observation is useful. 
\begin{remark}\label{Rem:NnormalG}
Let $G \in \D_5$ and $N$ be a proper normal subgroup of $G$. For any Sylow subgroup $P$ of $N$, Frattini's Lemma implies that $\N_G(P) \nsubseteq N$. Consequently, for every $p \in \varpi(N)$, the corresponding Sylow $p$-subgroup $P$ of $N$  is not self-normalizing in $G$. Thus, $\left\lvert\varpi(N)\right\rvert \leq 4$ (since  $N$ itself is also not self-normalizing). If $\left\lvert P\right\rvert=p^a$, then in addition to $P$, $N$ includes non-selfnormalizing, non-$G$-conjugate subgroups of orders $p$, $p^2$, ..., $p^{a-1}$. As a result, $N$ falls into one of the following cases:
\begin{itemize}
    \item $N$ is a $Z$-group, meaning every Sylow subgroup of $N$ is cyclic. In this scenario, both $N/N'$ and $N'$ are cyclic groups (see Theorem 5.16 in \cite{Isaacs}), implying that $N$ is solvable with derived length $\dl(N) \leq 2$.
    \item $\left\lvert\varpi(N)\right\rvert=1$ and thus $N$ is  nilpotent of order $\left\lvert N\right\rvert=p^a$ for some prime $p$ and $a \leq 5$.
    
\item  $\left\lvert\varpi(N)\right\rvert=2$  and $|N|$ equals one of  $p^2q^2,\,  p^3q$ or $p^2q$ where $p$ and $q$ are distinct primes. In this case $N$ is solvable.

\item $\left\lvert\varpi(N)\right\rvert=3$ and $\left\lvert N\right\rvert=p^2qr$, where $p, q, r$ are distinct primes.
\end{itemize}
\end{remark}

Any group \( G \in \D_5 \) has no non-solvable proper normal subgroups, as demonstrated by the following proposition.

\begin{lemma}\label{Lem:N.nonsolv}
If $G$ is a finite group and $\D(G)=5$ then any proper normal subgroup of $G$ is solvable. 
\end{lemma}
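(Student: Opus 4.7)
My plan is an argument by contradiction. I suppose $N$ is a proper non-solvable normal subgroup of $G$ and derive an impossibility in two stages: first identify $N \cong A_5$, then rule out this possibility.

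For the identification, I apply \autoref{Rem:NnormalG} to $N$. The first three items of that remark each make $N$ solvable (for the $Z$-group and $p$-group cases this is immediate; for the $|\varpi(N)|=2$ case it follows from Burnside's $p^a q^b$ theorem). The only remaining case is $|\varpi(N)|=3$ with $|N|=p^2 qr$ for distinct primes $p,q,r$. Since $N$ is non-solvable, any nonabelian composition factor $S$ of $N$ satisfies $|\varpi(S)|\geq 3$, hence $\varpi(S)=\{p,q,r\}$ and $|S|\in\{pqr,\,p^2qr\}$; the former is ruled out because groups of order $pqr$ are solvable, so $|S|=|N|$ and $N$ itself is simple. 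The classical fact that $A_5$ is the unique nonabelian simple group of order $p^2 qr$ (provable via Feit--Thompson forcing $p=2$ and a Sylow-number analysis of $|N|=4qr$) yields $N\cong A_5$.

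For the second stage, $A_5\lhd G$ with $G\neq A_5$ (since $\D(A_5)=4\neq 5$ excludes $G=A_5$). As $Z(A_5)=1$, we have $A_5\cap C_G(A_5)=1$, giving two sub-cases. If $C_G(A_5)\neq 1$, pick $z\in C_G(A_5)\setminus\{1\}$; then $z\notin A_5$ and $z$ normalizes every $H\leq A_5$, so every nontrivial proper subgroup of $A_5$ is non-self-normalizing in $G$, yielding $\D_G(A_5)\geq 7$. If $C_G(A_5)=1$, then $G$ embeds in $\mathrm{Aut}(A_5)=S_5$, so $G=S_5$; here the subgroup $A_4\leq A_5$, which is self-normalizing in $A_5$, becomes non-self-normalizing in $S_5$ (as $\N_{S_5}(A_4)=S_4$), so $\D_G(A_5)\geq 5$. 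In both sub-cases $\D_G(A_5)\geq 5$, whence \autoref{Lem:D(G/N)} gives $\D(G/A_5)\leq 5-5-1=-1<0$, which is impossible.

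The principal obstacle is step one, the identification $N\cong A_5$; step two is then a clean application of \autoref{Lem:D(G/N)} once the centralizer-of-$A_5$ dichotomy is exploited.
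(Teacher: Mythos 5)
Your proposal is correct, and while it shares the paper's overall skeleton (reduce to $N\cong A_5$, then eliminate that case), both stages are executed differently. For the identification, the paper counts: it observes that $P_1,P,Q,R,N$ already exhaust the five classes, so $\D_G(N)=4$; it then kills the subcase where the Sylow $2$-subgroup of $N$ is cyclic by producing a sixth class (the normal $2$-complement of $N$), and in the remaining subcase $\left\lvert N\right\rvert=4qr$ it invokes Suzuki's classification of groups whose odd Sylow subgroups are cyclic to get $N\cong\PSL(2,q)\cong A_5$. You instead pass to a nonabelian composition factor, use Burnside's $p^aq^b$ theorem and the solvability of groups of squarefree order to force $N$ simple of order $p^2qr$, and then quote the classical uniqueness of $A_5$ among such simple groups; this is clean and avoids the explicit $\D_G(N)=4$ count, though your parenthetical justification is compressed --- a Sylow-number analysis of $4qr$ reduces to order $60$, after which one still needs the (equally classical) identification of the unique simple group of order $60$ with $A_5$; alternatively the fact follows at once from the Herzog classification the paper cites elsewhere, or from the same Suzuki theorem the paper uses. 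For the elimination stage, the paper derives $\D(G/N)\leq 0$, rules out $C_G(N)\neq 1$ via a normal complement, embeds $G$ into $\Aut(A_5)\cong S_5$, and finishes by the GAP computation $\D(S_5)=13$; your version is more self-contained, since in the case $C_G(A_5)\neq 1$ you exhibit all seven classes of nontrivial proper subgroups of $A_5$ (pairwise non-conjugate because their orders differ) as non-self-normalizing, and in the case $G\cong S_5$ you exhibit five explicit classes ($C_2$, $C_3$, $V_4$, $C_5$, $A_4$) by hand, so no machine computation is needed. Both arguments are valid; yours trades the paper's appeals to Suzuki and to GAP for the composition-factor reduction and explicit subgroup bookkeeping inside $A_5$.
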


\begin{proof}
Assume the opposite and let $N$ be a proper normal non-solvable subgroup of $G$. 
In view of \autoref{Rem:NnormalG}, $ \left\lvert N\right\rvert= p^2qr$ with $p, q, r$ being distinct primes and one of them is $2$ (or else $N$ is solvable).  Let $P$, $Q$, and $R$ be the Sylow $p$, $q$, and $r$-subgroups of $N$, respectively. The subgroup $P_1$ of order $p$ within $P$ is not self-normalizing in $N$ (and  in $G$). Therefore, the groups $P_1$, $P$, $Q$, $R$, and $N$ represent the five distinct $G$-conjugacy classes of nontrivial, non self-normalizing subgroups of $G$.
Thus, $N$ does not contain any other nontrivial  non self-normalizing subgroup of $G$ besides $P_1$, $P$, $Q$, and $R$
and $\D_G(N) = 4$.

Assume first that  $q=2$ and so  $\left\lvert N\right\rvert=2p^2r$. The Sylow $2$-subgroup of $N$ is cyclic, and by Corollary 5.14 in \cite{Isaacs}, $N$ has a normal $2$-complement, denoted as $S$. Then $|S|= p^2r$ and $S$   is not $G$-conjugate to any of $P_1, P, Q, R$ and $N$, contradicting our assumption.

Assume now that $p=2$ and so $ \left\lvert N\right\rvert= 4qr$. Then every Sylow subgroup of $N$ of odd order is cyclic while the Sylow $2$-subgroup of $N$ is of order $4$.  According to Theorem A in \cite{Suzuki} we get that $N \cong \PSL(2,s) \times K$, for some $Z$-group  $K$, and $s \geq 5$ because  $N$ is non-solvable. 
Since $4qr = \left\lvert \PSL(2,s) \times K \right\rvert$, we conclude that $s$ is an odd prime, and without loss of generality  $s=q$, and  $K$ is  trivial. Hence
 \[
 4qr= |N|=\left\lvert \PSL(2,q)  \right\rvert = \frac{q(q^2-1)}{2}.
 \] 
Thus, $2r = \frac{q-1}{2} \cdot \frac{q+1}{2}$ leads to $q=5$ and $r=3$, which gives $N \cong \PSL(2,5) = A_5$. Since $\D_G(N) = 4$, we have $\D(G/N) \leq \D(G) - \D_G(N) - 1 = 0$. Consequently, $G/N$ is a cyclic group of prime order.  If $C_G(N) \neq 1 $ and $  x \in C_G(N)$, then $ \langle x \rangle$  would serve as a normal complement of $ N $ in $ G$, contradicting the fact that \( \D(G) = 5 \). The $N/C$ theorem implies that $G$ is isomorphic to a subgroup  of $\Aut(N)  \cong S_5$ and so  $G \cong S_5$. But $\D(S_5)= 13$ and this final contradiction completes the proof of  the lemma. 
\end{proof}

We  next prove  that no simple group is contained in $\D_5$, while noting that, in contrast, $\PSL(2,8)) \in \D_6$.

\begin{proposition}\label{Lem:Simple}
    Assume $G$ is a non-abelian simple group $G \ncong A_5$. Then $\D(G) \geq 6$.
\end{proposition}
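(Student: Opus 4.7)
I would assume for contradiction that $G$ is a non-abelian simple group with $\D(G) \leq 5$ and $G \not\cong A_5$, and derive a contradiction.

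The key tool is a Sylow-based lower bound on $\D(G)$. For each $p\in\varpi(G)$, let $P_p$ be a Sylow $p$-subgroup, $|P_p|=p^{a_p}$. Since every proper subgroup of a $p$-group is strictly contained in its normalizer inside the $p$-group, each of the orders $p, p^2, \ldots, p^{a_p-1}$ yields at least one subgroup of $P_p$ that is non-self-normalizing in $G$. Subgroups of different prime-power orders are pairwise non-conjugate, so writing $n_p = (a_p-1)+\epsilon_p$ where $\epsilon_p\in\{0,1\}$ records whether $P_p$ itself is non-self-normalizing in $G$, one obtains
\[
\D(G)\;\geq\;\sum_{p\in\varpi(G)} n_p.
\]

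Classical structure theorems then constrain the tuple $(n_p)$. By Feit--Thompson $2\in\varpi(G)$; by Burnside's $p^aq^b$-theorem $|\varpi(G)|\geq 3$; and by Burnside's normal $p$-complement theorem no abelian Sylow subgroup of the simple group $G$ is self-normalizing (otherwise $N_G(P_p)=C_G(P_p)$ would provide a normal $p$-complement). In particular $P_2$ is non-cyclic, so $a_2\geq 2$, and either $P_2$ is abelian (forcing $\epsilon_2=1$ and $n_2\geq 2$) or $P_2$ is non-abelian (forcing $a_2\geq 3$ and $n_2\geq 2$). Combined with $n_p\geq 1$ for all $p\in\varpi(G)$ and $|\varpi(G)|\geq 3$, the hypothesis $\sum n_p\leq 5$ forces $|\varpi(G)|\in\{3,4\}$ and leaves only a short list of admissible tuples $(a_p,\epsilon_p)$, each pinning $|G|$ down to a small number of factorizations.

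Next I would invoke the classification of non-abelian simple groups of small order---equivalently, Dickson's subgroup classification for $\PSL(2,q)$ together with Thompson's classification of minimal simple groups---to reduce to a finite list of candidates such as $A_5$, $\PSL(2,7)$, $\PSL(2,8)$, $\PSL(2,11)$, and $A_6$. For each candidate $G\not\cong A_5$ I would exhibit six non-self-normalizing $G$-conjugacy classes by combining the Sylow-lower-bound classes with suitable mixed-order subgroups (for instance $A_4\subset S_4$, $V_4\subset A_4$, $C_6\subset D_{12}$, or $S_3\subset D_{12}$) that sit properly inside a larger overgroup.

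The main obstacle is this last verification. In several candidates the Sylow lower bound saturates exactly $5$ (e.g.\ $\PSL(2,11)$, where $n_2+n_3+n_5+n_{11}=2+1+1+1$), so one must genuinely locate additional non-self-normalizing classes arising from mixed-order subgroups. One must also carefully distinguish $G$-conjugacy classes from isomorphism types, accounting for outer-automorphism fusion---for instance the two $G$-classes of $S_4$-subgroups in $\PSL(2,7)$ or the two $G$-classes of $A_5$-subgroups in $\PSL(2,11)$---so as to avoid both undercounting and overcounting.
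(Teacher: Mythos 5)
Your first half coincides with the paper's argument: the same Sylow-tower count $\D(G)\geq\sum_p n_p$, the same use of Feit--Thompson, Burnside's $p^aq^b$-theorem, and Burnside's normal $p$-complement theorem to force $n_2\geq 2$ and $3\leq\left\lvert\varpi(G)\right\rvert\leq 4$. The genuine gap is in the reduction to a finite list of candidate groups. Neither Dickson's subgroup theorem for $\PSL(2,q)$ nor Thompson's classification of minimal simple groups does this job: Thompson's theorem only lists simple groups all of whose \emph{proper subgroups} are solvable, which is neither implied by nor implies the arithmetic constraints you have derived, and Dickson's theorem presupposes you already know $G$ is a $\PSL(2,q)$. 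What is actually needed, and what the paper uses, is: (i) for $\left\lvert\varpi(G)\right\rvert=3$, Herzog's classification of the simple groups whose order is divisible by exactly three primes (an eight-element list including $\PSL(2,17)$, $\PSL(3,3)$, $\mathrm{U}_3(3)$, $\mathrm{U}_4(2)$, all absent from your ``such as'' list); and (ii) for $\left\lvert\varpi(G)\right\rvert=4$, where $\left\lvert G\right\rvert=2^apqr$ with $2\leq a\leq 3$ and all odd Sylow subgroups cyclic, Suzuki's theorems on simple groups with cyclic odd Sylow subgroups together with the Brauer--Suzuki theorem to rule out quaternion Sylow $2$-subgroups, which pin $G$ down to $\PSL(2,p)$ and then to $\PSL(2,11)$, $\PSL(2,13)$, $\PSL(2,23)$ by elementary arithmetic. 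Without citing classification results of this strength, your plan has no mechanism for passing from ``$\left\lvert G\right\rvert$ has one of finitely many shapes'' to ``$G$ is one of finitely many groups.''

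Your proposed endgame --- exhibiting six non-self-normalizing classes by hand via mixed-order subgroups such as $A_4<S_4$ or $S_3<D_{12}$, with attention to fusion of the two classes of $S_4$ in $\PSL(2,7)$ --- is a legitimate, GAP-free alternative to the paper's computational verification of $\D$ for each survivor, and you correctly identify that this is where the real work lies for groups like $\PSL(2,11)$ where the Sylow bound saturates at $5$. But as written the proof is incomplete until the enumeration step is supplied.
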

\begin{proof}
    In view of the work done in \cite{Bianchi} (see \autoref{Prop:D_{0-4}}) 
    it suffices to show that there is no simple group in $\D_5$. Assume the opposite for contradiction, and let $G \in \D_5$ be simple and non-abelian. 
So $\left\lvert\varpi(G)\right\rvert \geq 3$ and $2 \in \varpi(G)$.

Let $p \in \varpi(G)$ and  $P$ be any Sylow $p$-subgroup of $G$. If  $P$ is abelian then $\N_G(P) >P$, or else $P \leq \mathbf{Z}(\N_G(P))$ and thus $G$ has a normal $p$-complement, by Theorem 5.13 in \cite{Isaacs}. Observe now that whenever $|P| \geq p^2$ the prime $p$ adds at least $+2$ to $\D(G)$ due to the presence of subgroups of order $p$ and $p^2$  (even if $|P|= p^2$ since  $P$ itself in not self-normalizing in this case). If $|P|=p$  then the prime  $p$  contributes $+1$ to $\D(G)$.
Corollary 5.14 in \cite{Isaacs} ensures  that the Sylow $2$-subgroups of $G$ are non-cyclic, contributing at least $+2$ to $\D(G)$.

We claim that $\left\lvert\varpi(G)\right\rvert < 5$. Indeed, if $p_1, \ldots, p_4$ are four distinct odd primes in $\varpi(G)$, each contributes at least $+1$ to $\D(G)$, while the prime $2$ contributes at least $+2$. This results in $\D(G) \geq 6$, contradicting our assumption.
Hence $3 \leq \left\lvert\varpi(G)\right\rvert \leq 4$. 

Assume first that $\left\lvert\varpi(G)\right\rvert = 3$. Then $G$ is a simple group whose order is divisible by three primes only and thus is isomorphic to one of the following
\[
A_5,\,  A_6,\,  \PSL(2,7), \, \PSL(2,8),\,  \PSL(2,17), \, \PSL(3,3), \, \mathrm{U}_3(3), \, \mathrm{U}_4(2), 
\]
according to \cite{Herzog}. It can easily be checked by GAP that
$
\D(A_6)= 11, \, \D(\PSL(2,7))=8, \,  \, \D(\PSL(2,8))=6 , \, \, \D( \PSL(2,17))= 14, \, \, \D(\PSL(3, 3))= 38,  \, \, \D(\mathrm{U}_3(3)) = 35 $ and  $\D(\mathrm{U}_4(2)) = 96$.    As none of them lies in $\D_5$ we get that 
$\left\lvert\varpi(G)\right\rvert = 4$. As the  prime $2$ contributes  at least $+2$ in $\D(G)$, each one of the remaining three odd primes must contribute exactly $+1$ in $\D(G)$.  Hence 
\[
\left\lvert G \right\rvert = 2^apqr, 
\]
with $2\leq a \leq 3$. In particular $G$ has cyclic Sylow subgroups for each odd prime. 

{\bf Case 1.} $a=2$ and let $r < q < p$. 
From Theorem on pg.671 in \cite{Suzuki} we have   $G \cong \PSL(2,p)$ and so  $ 4pqr= \left\lvert G \right\rvert= \frac{p(p^2-1)}{2}$. 
As $3 \big|  (p-1)(p+1)$ for every prime $p\neq 3$  we get  $r=3$ and 
\[
12pq= \left\lvert G \right\rvert= \frac{p(p^2-1)}{2}. 
\]
Thus, we have $24q = p^2 - 1$ with $q < p \geq 7$. Checking case by case the divisors of $24q$ reveals two possibilities: either $p - 1 = 2q$ and $p + 1 = 12$, or vice versa, $p - 1 = 12$ and $p + 1 = 2q$. This leads to the two groups $\PSL(2,11)$ and $\PSL(2,13)$. Using GAP, we find $\D(\PSL(2,11)) = 8$ and $\D(\PSL(2,13)) = 10$. Therefore, Case 1 cannot occur. 

{\bf Case 2.} $a=3$ and let $A$ be a Sylow $2$-subgroup of $G$ with $|A|=8$. Then $\N_G(A) = A$  or else $\N_G(A) >A$ and so $G$ would have non self-normalizing subgroups of order $2, 4$ and $8$, leaving no room for the non self-normalizing subgroups provided by the other three odd primes.
Also $A$ is non-abelian or else $A \leq \Z(\N_G(A))$ forcing $G$ to have a normal $2$-complement.

Hence $A$ is either isomorphic to the quaternion  or the dihedral.  According to the  Brauer-Suzuki Theorem (see \cite{Bra-Suz}) there  is no simple group having the quaternion group as a Sylow 2-subgroup. So  $A\cong D_4$ and Theorem B in \cite{Suzuki} applies. As $\D(PSL(2,8)= 6$ we get that  $G \cong \PSL(2,s)$ for some odd $s$ and without loss may assume $s=p$ with $p \geq 5$. Hence 
\[
8pqr= \left\lvert G \right\rvert = \frac {p(p^2-1)}{2}.
\]
Since \( 3 \mid (p^2-1) \), we can set \( r=3 \), leading to the equation 
\[
48q= (p-1)(p+1).
\]
As earlier we check the divisors of $48q$ and we end up only with one possible pair $(p-1, p+1)=(2q, 24)$. Therefore  $G \cong \PSL(2,23)$ with   $\D(\PSL(2,23))=15$. Thus Case 2 is also excluded confirming the lemma.
\end{proof}

We are ready now to prove that every group in $\D_5$ is solvable.
\begin{theorem}\label{Thm:D_5-is-solvable}
    Assume $G$ is a finite group with $\D(G)=5$. Then $G$ is solvable. 
\end{theorem}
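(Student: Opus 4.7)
Suppose for contradiction that some $G\in\D_5$ is non-solvable. By \autoref{Lem:N.nonsolv} every proper normal subgroup of $G$ is solvable, and \autoref{Lem:Simple} rules out $G$ being simple, so $G$ admits a maximal normal subgroup $N$ with $G/N$ simple. Since $N$ is solvable while $G$ is not, $G/N$ must be non-abelian. \autoref{Lem:D(G/N)} gives $\D(G/N)\le 4$, and combined with \autoref{Prop:D_{0-4}} and \autoref{Lem:Simple} this forces $G/N\cong A_5$, hence $\D(G/N)=4=\D(G)-1$. Then \autoref{Lem:D(G/N)=D(G)-1} yields that $N$ is cyclic of prime order $p$.

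Next I would show $N\le\mathbf{Z}(G)$. Conjugation induces a homomorphism $G\to\Aut(N)$ with abelian target, while $G/N\cong A_5$ is perfect, so the image is trivial, i.e.\ $C_G(N)=G$. From this structural data alone the plan is to count non self-normalizing classes of $G$ directly, without classifying $G$ up to isomorphism.

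The four non self-normalizing classes of $A_5$ (represented by subgroups of orders $2,3,4,5$) pull back to four subgroups of $G$ of orders $2p,3p,4p,5p$, each containing $N$ and non self-normalizing in $G$; together with $N$ itself this accounts for five classes, all of whose representatives contain $N$. For a sixth class, pick any prime $q\in\{2,3,5\}\setminus\{p\}$ and a Sylow $q$-subgroup $P$ of $G$. Then $P\cap N=1$, and centrality of $N$ gives $N_G(P)\supseteq PN\supsetneq P$, so $P$ is non self-normalizing, while $N\not\subseteq P$ shows the class of $P$ differs from the previous five. A short case split on whether $p\in\{2,3,5\}$ verifies that $|P|\in\{3,4,5\}$ equals neither $|N|=p$ nor any of $2p,3p,4p,5p$, yielding $\D(G)\ge 6$ and contradicting $\D(G)=5$.

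The main obstacle will be the reduction phase: pushing through to ``$G/N\cong A_5$ with $N$ central of prime order''. Once this is in hand, the six-class count is essentially forced. A Schur-multiplier argument could alternatively identify $G$ with $A_5\times\Z/p\Z$ or $\SL_2(5)$, but centrality of $N$ makes this classification unnecessary.
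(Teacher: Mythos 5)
Your proposal is correct and follows essentially the same route as the paper: reduce to $G/N\cong A_5$ with $N$ cyclic of prime order and central, then exhibit a sixth non self-normalizing class via a Sylow subgroup for a prime different from $\left\lvert N\right\rvert$, contradicting $\D(G)=5$. The only cosmetic differences are that you derive centrality of $N$ from the perfectness of $A_5$ rather than from the $N/C$-theorem dichotomy $C_G(N)\in\{N,G\}$, and you allow any prime $q\in\{2,3,5\}\setminus\{p\}$ where the paper fixes $q=3$ after assuming without loss that $s\neq 3$.
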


\begin{proof}
Let $G$ be a counterexample, that is,  $G \in \D_5$ is non-solvable.  Then 
    $G$ is not abelian and is not  simple according to  \autoref{Lem:Simple}, so let $N$ be a proper normal subgroup of $G$. 
     By \autoref{Lem:N.nonsolv},  $N$ is solvable, forcing $G/N$ to be non-solvable. We have  $\D(G/N) \leq \D(G) -1 = 4$ and the only non-solvable group in $\D_i$ for $i \leq 4$,  is $A_5$. Consequently,
 $G/N \cong A_5$ and $\D(G/N)= 4$ and so (see \autoref{Lem:D(G/N)=D(G)-1})   $N\cong C_s$ is a cyclic group of prime order $s$. Consider the centralizer $\C_G(N)$ of $N$ in $G$. Since $G/N$ is simple and $\C_G(N)$ is normal in $G$, either $\C_G(N) = N$ or $\C_G(N) = G$. The first option is impossible due to the $N/C$ theorem (otherwise $A_5 \cong G/N$ would be isomorphic to a subgroup of $\Aut(C_s) \cong C_{s-1}$). Therefore, $N$ must be central in $G$, in fact, $N = \Z(G)$, as the previous argument applies to $\Z(G)$ as well.

The prime $s$ is different from at least one of $3, 5$ and without loss we may assume $s \neq 3$. 
Write  $\bar{A}, \bar{B}, \bar{C}, \bar{D} $ for  the representatives of the four nontrivial non self-normalizing subgroups of $G/N$ with  orders 
$3, 5, 4$ and $2$ respectively. If $A, B, C, D$ are their preimages in $G$ then these four along with $N$  form a set of representatives of the five classes of non self-normalizing subgroups of $G$ with orders $3s, 5s, 4s, 2s$ and $s$ respectively. If $X \in \Syl_3(A)$ (and thus $X \in \Syl_3(G)$) then we clearly have $A = N \times X$ and $N_G(X) \geq A > X$. Hence $X$ is not self-normalizing and is not $G$-conjugate  to any of the other five subgroups of $G$, implying  $\D(G) \geq 6$. 

This final contradiction proves that no such $G$ exists and the theorem follows.
\end{proof}

The final component of our main theorem regarding  the derived length of groups in $\D_5$ is addressed in the next theorem.
\begin{theorem}\label{Th:dl}
    If $G \in \D_5$ then its  derived length $\dl(G) $ satisfies $\dl(G) \leq 3$.
\end{theorem}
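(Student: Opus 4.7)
The plan is to take a minimal normal subgroup $N$ of $G$—elementary abelian of order $p^k$ since $G$ is solvable by Theorem~\ref{Thm:D_5-is-solvable}—and reduce the problem to the quotient $G/N$ using Lemma~\ref{Lem:D(G/N)} together with Proposition~\ref{Prop:D_{0-4}}.

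The case $k \geq 2$ is straightforward: the proper nontrivial subgroups of $N$ of orders $p, p^2, \dots, p^{k-1}$ are all non-self-normalizing in $G$ (being normal in $N$) and lie in distinct $G$-conjugacy classes, so $\D_G(N) \geq k-1 \geq 1$. Lemma~\ref{Lem:D(G/N)} then gives $\D(G/N) \leq 3$, whence $\dl(G/N) \leq 2$ by Proposition~\ref{Prop:D_{0-4}}, and since $N$ is abelian, $\dl(G) \leq 3$.

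The case $k = 1$ requires more care. Here $\D(G/N) \leq 4$, and since $G/N$ is solvable, Proposition~\ref{Prop:D_{0-4}} yields $\dl(G/N) \leq 2$ unless $G/N \cong \SL_2(3)$; in the first alternative $\dl(G) \leq 3$ is immediate because $N$ is abelian. In the second alternative, $\D(G/N) = 4 = \D(G) - 1$, so the five $G$-conjugacy classes of non-self-normalizing subgroups of $G$ must be exactly $N$ together with the preimages of the four non-self-normalizing subgroups of $G/N$; in particular, \emph{every non-self-normalizing subgroup of $G$ contains $N$}.

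To finish this remaining case I would analyze $G''$. Since $(G/N)''$ has order $2$, one obtains $|G''| = 2|G'' \cap N| \in \{2, 2p\}$. The characteristic subgroup $G''$ is normal and hence non-self-normalizing, so it must contain $N$; this rules out $|G''| = 2$ on order grounds. If $|G''| = 2p$ with $p = 2$, then $G''$ has order $4$, is abelian, and so $\dl(G) \leq 3$. The hard case is $p$ odd: by Cauchy, $G''$ contains an involution $t \notin N$, and since $\langle t \rangle$ does not contain $N$ it must be self-normalizing in $G$, so $\C_G(t) = \langle t \rangle$ (using that $\mathrm{Aut}(\langle t\rangle)$ is trivial), and the $G$-conjugacy class of $t$ has $|G|/2 = 12p$ elements. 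On the other hand, applying Schur--Zassenhaus to the preimage of $Q_8$ in $G$ (of order $8p$ with $\gcd(p,8) = 1$) identifies a Sylow $2$-subgroup of $G$ with $Q_8$, which carries a unique involution; hence $G$ contains at most $[G:Q_8] = 3p$ involutions, contradicting $12p$. The main obstacle is precisely this involution count, which crucially exploits the strong constraint that $\D(G) = 5$ forces every non-self-normalizing subgroup of $G$ to contain $N$.
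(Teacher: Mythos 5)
Your proof is correct, but it follows a genuinely different route from the paper. The paper applies \autoref{Rem:NnormalG} to $G'$ itself, constraining $|G'|$ to a short list of shapes ($Z$-group, $p^a$, $p^2q$, $p^2q^2$, $p^3q$, $p^2qr$), and then in each case identifies the five conjugacy classes explicitly to force $G^{(2)}$ to be abelian (with a small extra argument to exclude an extraspecial Sylow subgroup). You instead pass to a minimal normal subgroup $N$ and reduce to $G/N$ via \autoref{Lem:D(G/N)} and the classification of $\D_{\le 4}$, so the only real work is the exceptional case $G/N \cong \SL_2(3)$ with $|N|=p$; your treatment of that case is sound, though note that the step ``every non-self-normalizing subgroup of $G$ contains $N$'' requires unpacking the counting behind \autoref{Lem:D(G/N)} (partitioning the classes into those below $N$, equal to $N$, above $N$, and neither) rather than just quoting the inequality --- this is exactly the bookkeeping the paper performs elsewhere, e.g.\ in \autoref{Lem:N.nonsolv}, so it is legitimate. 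Your final involution count via Schur--Zassenhaus and the unique involution of $Q_8$ is valid, but there is a shortcut you could have used: since $G''\trianglelefteq G$ has order $2p$ and $t\in G''$, the conjugacy class of $t$ lies inside $G''$ and so has at most $2p-1$ elements, immediately contradicting $|G|/|\C_G(t)|=12p$. On balance, your argument is more modular --- it leans on the $\D_4$ classification and mirrors the inductive proof of \autoref{Cor:dl} --- whereas the paper's is more uniform and self-contained, analyzing $G'$ directly without singling out the $\SL_2(3)$ exception.
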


\begin{proof}
Let $G \in \D_5$ and assume $G$  is non-abelian.  As we have seen $G$ is solvable, so  $1< G' < G$.
We apply \autoref{Rem:NnormalG} for $G'$ in the place of $N$, so $\left\lvert\varpi(G')\right\rvert \leq 4$. 
We distinguish four cases. 

{\bf Case 1:}   $G'$ is a $Z$-group and thus $\dl(G') \leq 2$ as wanted.

{\bf Case 2:}  $\left\lvert\varpi(G')\right\rvert=1$.   Then $\left\lvert G'\right\rvert=p^a$ for some prime $p$ and $a \leq 5$. If the derived length of $G'$ is  $k + 1$, it was shown by
Hall (see III.7.11 in \cite{Huppert}) that $a \geq 2^k + k$. Hence $k \leq 1$ and $\dl(G') \leq 2$ as desired.

{\bf Case 3:} $\left\lvert\varpi(G')\right\rvert=2$ and  $\left\lvert G'\right\rvert$ is one of  $p^2q^2$, $p^3q$, or $p^2q$, where $p$ and $q$ are distinct primes. If $\left\lvert\varpi(G')\right\rvert= p^2q$ then $G'$ has  a normal Sylow subgroup (see Theorem 1.31 in \cite{Isaacs}), so  $G^{(2)}$ is either trivial or abelian  and thus $\dl(G')\leq 2$. 

If $\left\lvert\varpi(G')\right\rvert= p^2q^2$, let $P$ and $Q$ be Sylow $p$ and $q$ subgroups of $G'$, with $P_1 < P$ and $Q_1 < Q$ subgroups of orders $p$ and $q$, respectively. Then $P, P_1, Q, Q_1$ along with $G'$ represent the five classes of non self-normalizing subgroups of $G$. If $G^{(2)} \neq 1$, it must be one of $P$, $P_1$, $Q$, or $Q_1$, since it is normal in $G$. In any case, $G^{(2)}$ is abelian, leading to $G^{(3)}=1$ as required.

If $\left\lvert\varpi(G')\right\rvert= p^3q$  let $P$, $Q$ be  Sylow $p$ and $q$-subgroups of $G'$ and   $P_1 < P_2  < P$ subgroups of order $p$ and $p^2$, respectively,  within $P$. As before, the groups $P_1, P_2, P, Q$ and $G'$ represent the five classes of non self-normalizing subgroups of $G$. Therefore, $G^{(2)}$ must either be one of them  or be trivial. All these groups are abelian, with the potential exception of $P$ that could be an extra special group of order $p^3$. In this scenario, the center $\Z(P)$ is a normal subgroup of $G$ of order $p$, and any non-central element of $P$ with order $p$ generates another non self-normalizing subgroup of $G$ that is not conjugate to $\Z(P)$. This leads to $\D(G) \geq 6$, resulting in a contradiction. Thus, $G^{(2)}$ must  be abelian.

{\bf Case 4:} $\left\lvert\varpi(G')\right\rvert=3$ and 
 and $\left\lvert G'\right\rvert=p^2qr$, where $p, q, r$ are distinct primes.
As earlier, if  $P, Q, R$ are   Sylow $p, q$ and $r$-subgroups of $G'$ and   $P_1 < P $ a subgroup of order $p$ then $P_1, P, Q, R $ and $G'$ represent the five classes of non self-normalizing subgroups of $G$. Hence $G^{(2)}$ must be one of them and hence it is abelian. 

This final case completes the proof of the theorem.    
\end{proof}

 \autoref{Cor:dl} was proposed in \cite{Bianchi} and improves their earlier bound of $\log_2(n/2)+1$ from Theorem 3 in \cite{Bianchi}. Its proof follows easily  from \autoref{Thm:A}.
 
\begin{proofofCorA}
  Induct on $n$, with the base case $n=5 $,   being trivially true by \autoref{Th:dl}. For $n \geq 6$, let  $N$ be  a minimal normal subgroup of $G$. Then $N$ is abelian while $\D(G/N) \leq n-1$, and so, $\dl(G/N) \leq n-3$ and $\dl(G) \leq n-2$.      
\end{proofofCorA}

\bibliographystyle{amsalpha}
\bibliography{Bibliography}
\end{document}